\documentclass[11pt, oneside]{amsart}
\usepackage{amsmath}
\usepackage{amsthm}
\usepackage{amssymb} 
\usepackage[mathscr]{euscript}
\usepackage{amsmath}
\usepackage{amsthm}
\usepackage{amssymb}
\usepackage{fancyhdr}
\usepackage{enumerate}
\usepackage{amscd}
\usepackage[all]{xy}
\usepackage{graphicx}
\usepackage{color}
\usepackage{hyperref}
\usepackage{comment}
\usepackage{thmtools}
\usepackage{cleveref}
\usepackage{thm-restate}
\usepackage{filecontents}
\usepackage[backend=biber]{biblatex}
\bibliography{AH MME Staticity}
\hypersetup{
    colorlinks,
    citecolor=black,
    filecolor=black,
    linkcolor=black,
    urlcolor=black
}
\usepackage{tikz}
\usepackage[all]{xy}
\usepackage{graphicx}
\usetikzlibrary{backgrounds}


\newtheorem*{theorem*}{Theorem}

\theoremstyle{plain}
\newtheorem{lemma}{Lemma}[section]

\newtheorem{proposition}[lemma]{Proposition}
\newtheorem{theorem}[lemma]{Theorem}

\numberwithin{equation}{section}

\theoremstyle{remark}

\theoremstyle{definition}
\newtheorem{remark}[lemma]{Remark}

\newtheorem{definition}[lemma]{Definition}

\newenvironment{enumeratei}
{\begin{enumerate}[\upshape (i)]}{\end{enumerate}}

\frenchspacing

\usepackage[tmargin=1in,bmargin=1in,lmargin=1in,rmargin=1in]{geometry}
\linespread{1.2}

\begin{document}
\title[Staticity of Asymptotically Hyperbolic Minimal Mass Extensions]{Staticity of Asymptotically Hyperbolic Minimal Mass Extensions}
\author{Daniel Martin}

\date{\today}

\begin{abstract}
	In this paper, we give a definition for the Bartnik mass of a domain whose extensions are asymptotically hyperbolic manifolds.  With this definition, we show that asymptotically hyperbolic admissible extensions of a domain that achieve the Bartnik mass must admit a static potential.  Given a non-static admissible extension of a domain, we are able to construct a one-parameter family of metrics that are close to the original metric, have smaller mass, share the same bound on the scalar curvature, and contain the domain isometrically.  
\end{abstract}

\maketitle

\section{\label{sec:level1}Introduction}

The ADM-mass is a well-studied geometric invariant defined at the infinity of an asymptotically flat manifold, which quantifies the mass of an isolated gravitational system.  
In an attempt to define a quantity that corresponds to the mass contribution coming from a bounded region with non-negative curvature and compact boundary, Bartnik introduced a quasi-local mass, known now as Bartnik mass \cite{Bartnik89}.  Bartnik first introduced a class of asymptotically flat manifolds, called admissible extensions, of such a given bounded region.  The Bartnik mass is then defined as the infimum of their corresponding ADM-masses.  Because of its close relation to the ADM-mass, the Bartnik mass shares some desirable properties with its global counterpart, such as non-negativity.  Along with this new definition, Bartnik made several conjectures regarding some of its characteristics.  Of particular interest is one that is the focus of this paper, which is the staticity of extensions that achieve the minimal ADM-mass, see \cite{HuiskenIlmanen01}.  That is, if there is an admissible extension of a domain that achieves the minimum ADM-mass, then it must admit a function called a static potential.  These functions make up the kernel of the formal $L^2$-adjoint of the linearized scalar curvature operator and determine the ability to deform the scalar curvature.  The study of Bartnik mass, and other notions of quasi-local mass, are far from complete - even in the asymptotically flat setting.  For instance, it is not yet known whether there always exists a minimal mass extension given any domain.  A nice discussion on the topic in the asymptotically flat setting can be found in \cite{Corvino2017}.  In this paper we define an appropriate analog of the Bartnik mass for bounded regions with compact boundary and scalar curvature bounded below that of hyperbolic space, $-n(n-1)$.  These regions will admit asymptotically hyperbolic extensions, and our main objective is to provide a proof of his staticity conjecture in this new setting.  

In \cite{Corvino2000}, Corvino proves that asymptotically flat extensions of a domain achieving the Bartnik mass must be static.  These extensions achieving the Bartnik mass are referred to as minimal mass extensions and are asymptotically flat manifolds with non-negative scalar curvature, no closed minimal surfaces separating the boundary from infinity, and contain the given domain isometrically.  The proof follows from his result that allows for scalar curvature deformation when no static potential exists on that region.  We state this result in the appendix for reference, see [\ref{proposition:bump}].  In the current work, we will employ similar tools and strategy to prove the staticity conjecture for asymptotically hyperbolic manifolds.

In this new setting, the class of admissible extensions will consist of asymptotically hyperbolic manifolds containing the given domain isometrically, scalar curvature bounded below by $-n(n-1)$, and contain no closed minimal surfaces separating the boundary from infinity, other than possibly on the boundary.  The analogous staticity conjecture in the asymptotically hyperbolic setting is the following, which is our main result:

\begin{theorem*}
	Let $(M^n,g)$, for $3 \le n \le 7$, be an asymptotically hyperbolic minimal mass extension of a domain $\Omega$.  Then $M \setminus \overline{\Omega}$ admits a static potential. 
\end{theorem*}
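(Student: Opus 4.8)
The plan is to argue by contradiction, adapting Corvino's strategy from \cite{Corvino2000} with the asymptotically hyperbolic scalar-curvature deformation result [\ref{proposition:bump}] in place of its asymptotically flat counterpart. Suppose $M\setminus\overline{\Omega}$ admits no static potential; I will produce an admissible extension of $\Omega$ with strictly smaller mass --- indeed a one-parameter family of them, as in the abstract --- contradicting minimality. The first ingredient is a localization. The static potential equation $DR_{g}^{*}V=0$ is an overdetermined elliptic system, so a nontrivial solution on a connected open set restricts to a nontrivial solution on every open subset (unique continuation), and the space of static potentials on a connected open set is finite dimensional with dimension bounded independently of the set. Hence non-existence on $M\setminus\overline{\Omega}$ is inherited by every connected open subset, and --- what matters here --- it can be carried along a thin tube joining a small ball $U_{0}$ with $\overline{U_{0}}\subset M\setminus\overline{\Omega}$ to an annular neck $A=\{\rho<r<2\rho\}$ near infinity: for $\rho$ large the connected region $W=U_{0}\cup(\text{tube})\cup A$ carries no static potential, so by [\ref{proposition:bump}] the scalar curvature operator on $W$ is onto small prescribed changes of scalar curvature, with a controlled right inverse whose supporting metric perturbation vanishes outside $W$.

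Next I build the family. Writing $g$ near infinity in asymptotically hyperbolic normal form, let $\dot{g}$ be the variation that decreases the mass term in the expansion, cut off by a function equal to $1$ on $\{r>2\rho\}$ and to $0$ on $\{r<\rho\}$; then $\dot g$ is supported in $\{r>\rho\}$, is $o(1)$ at infinity, and $\tfrac{d}{dt}\big|_{0}\mathbf{m}(g+t\dot g)<0$, while the cutoff only affects a compact region. Its linearized scalar curvature $DR_{g}(\dot g)$ is supported in the transition region $A$ and is small there for $\rho$ large. Using the right inverse from [\ref{proposition:bump}] on $W$, I then find, for each small $t>0$, a metric perturbation $h_{t}$ supported in $W$ with $\|h_{t}\|_{C^{2}}=O(t)$ such that $g_{t}:=g+t\dot g+h_{t}$ has $R(g_{t})\ge -n(n-1)$ everywhere --- restoring $R$ to $R(g)$ on $A$ and, where needed, prescribing a small nonnegative change on $U_{0}\cup(\text{tube})$ to stay above the threshold. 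Since $t\dot g+h_{t}$ differs from $t\dot g$ only on the bounded set $W$, the mass is unchanged by $h_{t}$, so $\mathbf{m}(g_{t})=\mathbf{m}(g+t\dot g)<\mathbf{m}(g)$; moreover $g_{t}=g$ on a fixed neighborhood of $\overline{\Omega}$, so $\Omega$ sits isometrically inside $(M,g_{t})$. The one-parameter family $\{g_{t}\}_{t\in[0,\e)}$ then converges to $g$, has strictly smaller mass, and retains the bound $R\ge -n(n-1)$.

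It remains to check that each $g_{t}$, $t>0$ small, is admissible: that it has no closed minimal hypersurface separating $\partial\Omega$ from infinity other than possibly on $\partial\Omega$. Here one uses that $g_{t}=g$ near $\overline{\Omega}$ (so $\partial\Omega$ itself behaves as before), that any closed minimal hypersurface in an asymptotically hyperbolic manifold with $R\ge -n(n-1)$ is confined to a fixed compact region by barriers coming from the geodesic spheres near infinity, and that on that compact region $g_{t}$ is $C^{2}$-close to $g$; since $(M,g)$ has no such surface, a standard stability/barrier argument --- valid for $3\le n\le 7$, where area-minimizing hypersurfaces are smooth --- shows none is created. Then each $g_{t}$ with $t>0$ small is an admissible extension of $\Omega$ of strictly smaller mass, contradicting that $(M,g)$ is a minimal mass extension; hence $M\setminus\overline{\Omega}$ admits a static potential. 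I expect the correction step of the second paragraph to be the main obstacle: the error $DR_{g}(\dot g)$ lies in precisely the ``mass direction'' on $A$ --- its pairing with the model static potential on $A$ is exactly the amount by which the mass drops --- so a correction supported on $A$ alone is obstructed, and the room provided by the static-potential-free ball $U_{0}$ is genuinely needed; carrying this out requires the weighted-space and indicial-root analysis underlying [\ref{proposition:bump}] to remain compatible with the exponential growth of the asymptotically hyperbolic model static potentials.
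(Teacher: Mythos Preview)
Your overall contradiction scheme and your treatment of the no-horizon condition match the paper's, but the mechanism you propose for lowering the mass is genuinely different from the one the paper uses, and the gap you yourself flag at the end is precisely what the paper's method is designed to avoid. The paper does \emph{not} cut off an explicit mass-lowering variation $\dot g$ near infinity and then try to repair $DR_g(\dot g)$ on a tube via Proposition~\ref{proposition:bump}. Instead, following Andersson--Cai--Galloway, it first uses Proposition~\ref{proposition:bump} on a single non-static precompact set merely to bump $R_g$ strictly above $-n(n-1)$ somewhere, and then solves the Yamabe equation
\[
-\tfrac{4(n-1)}{n-2}\,\Delta_g u + R_g\,u + n(n-1)\,u^{\frac{n+2}{n-2}} = 0,\qquad u\to 1 \text{ at infinity}.
\]
The maximum principle forces $u\le 1$ with expansion $u=1+v_n t^n+o(t^n)$ and $v_n<0$. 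The conformal family $h_s=(1+sv)^{4/(n-2)}g$ then automatically has $R_{h_s}\ge -n(n-1)$ on a fixed neighborhood of infinity, and an explicit coordinate computation (Lemma~\ref{Lemma:asymptotics}) gives $\mu(h_s)<\mu(g)$. One then interpolates between $g$ and $h_s$ by a cutoff, and Proposition~\ref{proposition:bump} is invoked only on the \emph{bounded} transition annulus to restore $R\ge -n(n-1)$ there. Since the Yamabe solution already has the correct scalar curvature all the way to infinity, there is no ``mass-direction'' error to undo against an exponentially growing model static potential; the obstruction you worry about simply does not arise.

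Two smaller points. Your localization sentence ``non-existence on $M\setminus\overline{\Omega}$ is inherited by every connected open subset'' is false as written: restriction of static potentials goes the other way. What the paper actually uses is that non-staticity of $M\setminus\overline{\Omega}$ implies non-staticity of \emph{some} precompact $V$ (via an exhaustion argument: static potentials on an exhaustion, suitably normalized, would converge to one on $M\setminus\overline{\Omega}$), after which any connected $W\supset V$ is non-static by unique continuation --- presumably what you intended. Finally, your admissibility step is the paper's argument in contrapositive form: the paper phrases it as a compactness statement --- if every $g_{s_k}$ with $s_k\to 0$ carried a separating closed minimal hypersurface, barrier spheres confine them uniformly and Schoen--Simon compactness (this is where $3\le n\le 7$ enters) produces one in $(M,g)$, contradicting admissibility.
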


Another tool we will use is motivated by a technique of Andersson, Cai, Galloway \cite{Andersson2007} in their work on the positive mass theorem and rigidity statement for a class of conformally compact asymptotically hyperbolic manifolds.  We should mention that their definition of asymptotically hyperbolic, which we also adopt in this paper, is slightly stronger than other definitions used more recently in the literature, see \cite{Chrusciel2018, Huang2019, Pacheco2018}.  Since then the corresponding positive mass theorem and rigidity statement with weaker asymptotic assumptions have also been proven using different approaches \cite{Chrusciel2018,Huang2019}.  Andersson, Cai, and Galloway prove that if an asymptotically hyperbolic metric has scalar curvature bounded below by $-n(n-1)$, with strict inequality somewhere, they can produce another asymptotically hyperbolic metric with constant scalar curvature $-n(n-1)$ and strictly smaller mass.  In the following, we generate a \emph{family} of metrics with similar properties under a condition on the non-existence of a static potential instead of their assumption on the scalar curvature.  More specifically, we prove the following result, which is one of the main tools we use to prove our main theorem:

\begin{theorem*}
	Let $(M^n,g)$ be an asymptotically hyperbolic manifold with $R_g \ge -n(n-1)$.  If there exists a domain $\Omega$ such that no static potential exists on $M \setminus \overline{\Omega}$, then there is a family asymptotically hyperbolic metrics, $g_s$ for $0<s<1$, on $M$ such that $g_{s} \to g$ in $C^{k,\alpha}(M)$ as $s \to 0$ and 
	\begin{enumeratei}
		\item $\mu(g_{s}) < \mu(g)$,
		\item $R_{g_{s}} \ge -n(n-1)$,
		\item $g_{s} = g$ on a neighborhood of $\Omega$.  
	\end{enumeratei}
\end{theorem*}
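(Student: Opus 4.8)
\emph{Strategy.} I would follow the template of Corvino's asymptotically flat argument, using the Andersson--Cai--Galloway conformal deformation \cite{Andersson2007} in place of the harmonic conformal factor of the flat case. The first, localized, deformation (coming from \ref{proposition:bump}, in its asymptotically hyperbolic form) will create a region where $R > -n(n-1)$ strictly while leaving the metric untouched near $\Omega$ and near infinity; the second, conformal, deformation converts this surplus into a strict drop of the mass; the size of the first deformation is the parameter $s$. Before either step I would pass from $M\setminus\overline\Omega$ to a bounded annular region: static potentials on a connected open set form a finite-dimensional space on which restriction to a connected subdomain is injective by unique continuation, so since no static potential exists on $M\setminus\overline\Omega$, exhausting $M\setminus\overline\Omega$ and stabilizing dimensions yields bounded open sets with smooth boundary $\overline\Omega\subset\Omega_1\subset\subset\Omega_2\subset\subset\Omega_3\subset\subset M$ such that $\mathcal O:=\Omega_3\setminus\overline{\Omega_1}$ admits no static potential. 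I also fix a ball $\overline B\subset\Omega_3\setminus\overline{\Omega_2}$.

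\emph{The bump.} Choose $\phi\ge0$ supported in $B$, not identically zero. Applying \ref{proposition:bump} on $\mathcal O$ with target scalar curvature $R_g+s\phi$, for small $s>0$ I obtain a metric $g^{(1)}_s$ equal to $g$ outside $\mathcal O$ --- hence on a neighborhood of $\Omega$ and near infinity, so $\mu(g^{(1)}_s)=\mu(g)$ --- with $R_{g^{(1)}_s}=R_g+s\phi\ge-n(n-1)$, strict on $\{\phi>0\}$, and with $\|g^{(1)}_s-g\|_{C^{k,\alpha}(M)}\to0$ as $s\to0$.

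\emph{The conformal deformation (the main obstacle).} Following \cite{Andersson2007}, I look for $u_s>0$ with
\[
-\tfrac{4(n-1)}{n-2}\,\Delta_{g^{(1)}_s}u_s + R_{g^{(1)}_s}\,u_s = R_g\,u_s^{\frac{n+2}{n-2}},
\]
with $u_s\to1$ at infinity at the rate dictated by the asymptotically hyperbolic structure, and set $g^{(2)}_s:=u_s^{4/(n-2)}g^{(1)}_s$, so that $R_{g^{(2)}_s}=R_g\ge-n(n-1)$. Since $u\equiv1$ solves the equation at $s=0$ and is a supersolution for $s>0$ (as $R_{g^{(1)}_s}\ge R_g$), a barrier/monotone-iteration argument should yield a solution with $0<u_s<1$ (strict by the strong maximum principle). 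The delicate points, all already faced by Andersson--Cai--Galloway, are: (a) producing the subsolution and handling the zeroth-order coefficients, which are negative near infinity; (b) showing $g^{(2)}_s$ retains the strong asymptotic expansion, so that $\mu(g^{(2)}_s)$ is defined and, via the asymptotic mass identity, $\mu(g^{(2)}_s)<\mu(g^{(1)}_s)=\mu(g)$ --- it is the negativity of the leading coefficient of $u_s-1$ that forces the strict decrease; and (c) uniformity of the estimates in $s$, so that $u_s\to1$ in $C^{k,\alpha}(M)$ and hence $g^{(2)}_s\to g$.

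\emph{Restoring the metric near $\Omega$.} Since $g^{(2)}_s=u_s^{4/(n-2)}g\neq g$ near $\Omega$, pick $\chi$ equal to $1$ on a neighborhood of $\overline{\Omega_1}$ and to $0$ outside $\Omega_2$, and set $\hat g_s:=\chi\,g+(1-\chi)\,g^{(2)}_s$: this equals $g$ near $\Omega$, equals $g^{(2)}_s$ outside $\Omega_2$ (so $\mu(\hat g_s)=\mu(g^{(2)}_s)<\mu(g)$), and tends to $g$ in $C^{k,\alpha}(M)$. On the transition annulus $\Omega_2\setminus\overline{\Omega_1}\subset\mathcal O$ one has $R_{\hat g_s}=R_g+\eta_s$ with $\|\eta_s\|_{C^{k-2,\alpha}}\to0$; where $R_g>-n(n-1)$ this already gives $R_{\hat g_s}\ge-n(n-1)$ for small $s$, and where $R_g=-n(n-1)$ I would apply \ref{proposition:bump} on $\mathcal O$ once more to add a small $\psi_s\ge0$ supported in the transition annulus with $R_g+\eta_s+\psi_s\ge-n(n-1)$. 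The resulting $g_s$ then equals $g$ near $\Omega$, has $R_{g_s}\ge-n(n-1)$ everywhere and $\mu(g_s)=\mu(g^{(2)}_s)<\mu(g)$, and $g_s\to g$ in $C^{k,\alpha}(M)$, which are the three asserted properties.
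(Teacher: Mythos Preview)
Your proposal is correct in outline and follows the same three-stage architecture as the paper (localized Corvino bump + Andersson--Cai--Galloway conformal deformation + cutoff gluing with a second bump to repair scalar curvature on the transition annulus), but the implementations differ in two real ways. First, the paper introduces the parameter $s$ \emph{after} solving the Yamabe problem once: it bumps the scalar curvature once as a preliminary reduction to the case $R_g>-n(n-1)$ somewhere, then solves the single Yamabe equation for prescribed constant curvature $-n(n-1)$ to obtain $v=u-1\le 0$ with $v=v_nt^n+o(t^n)$, $v_n<0$, and finally sets $u_s=1+sv$. You instead make the bump itself carry the parameter ($R_{g^{(1)}_s}=R_g+s\phi$) and then solve an $s$-dependent prescribed-curvature equation with target $R_g$ rather than $-n(n-1)$. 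Second, the paper glues near infinity (cutoff is $1$ on a collar $W_0$, $0$ inside), whereas you glue near $\Omega$.

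Both routes work, but the paper's is more economical. With $u_s=1+sv$ the convergence $g_s\to g$ and the asymptotic expansion (hence Lemma~\ref{Lemma:asymptotics} and $\mu(h_s)<\mu(g)$) are immediate, and one only has to check $R_{h_s}\ge -n(n-1)$ on the collar $W_0$ via the signs of $u_s$ and $-\Delta_g u_s$; no $s$-uniform PDE estimates are needed. Your version requires verifying existence and polyhomogeneous boundary regularity for the $s$-dependent conformal equation with non-constant target $R_g$ and obtaining estimates uniform in $s$---all plausible (the linearization at $u=1$ is still $-\Delta_g+n$ near infinity, so an implicit-function-theorem argument in weighted H\"older spaces suffices, and the Hopf-type argument of \cite{Andersson2007} still yields $v_{n,s}<0$), but this is genuine extra work you have flagged in (a)--(c). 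The payoff of your choice is that $R_{g^{(2)}_s}=R_g\ge -n(n-1)$ holds \emph{globally}, so the only scalar-curvature repair needed is on the transition annulus near $\Omega$; in the paper's version $R_{h_s}\ge -n(n-1)$ is only established on $W_0$, and the repair happens on the transition annulus near infinity instead.
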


Given the ability to generate such a family, our staticity result follows almost immediately via a contradiction proof.  That is, given a minimal mass extension for which no static potential exists on $M \setminus \overline{\Omega}$, we may produce a sequence of asymptotically hyperbolic manifolds with smaller mass.  Since every metric in this family has smaller mass, contains $\Omega$ isometrically, and has scalar curvature bounded below by $-n(n-1)$, they must all contain a closed minimal surface in the interior separating the boundary from infinity.  Otherwise, they would all be admissible extensions and would contradict the minimality property of the minimal mass extension.  Assuming that every such metric admits such a closed minimal surface, we can construct a sequence with a limiting closed minimal surface.  This closed minimal surface separates the boundary from infinity within the original manifold and allows us to arrive at our contradiction.  The dimension restriction on the main theorem is due to compactness results of Schoen-Simon in \cite{schoensimon} regarding the convergence of such a sequence of minimal surfaces.

\section{Background}

In the previous section, we mentioned that our definition of asymptotically hyperbolic is comparatively stronger than is more commonly used in the literature at the current moment.  In particular, we assume that our asymptotically hyperbolic manifolds are conformally compact.  Despite these stronger asymptotics, Dahl and Sakovich prove \cite{Dahl2015} that these conformally compact asymptotically hyperbolic manifolds are actually dense in the set of asymptotically hyperbolic manifolds that are physically reasonable.  For this reason, we continue to follow the definition given in \cite{Andersson2007}:
\begin{definition}\label{def:AH}
	A Riemannian manifold $(M^n, g)$ is \emph{asymptotically hyperbolic} if it is conformally compact with smooth conformal compactification $(\widetilde{M},\widetilde{g})$, and with conformal boundary $\partial_{\infty} \widetilde{M} = \mathbb{S}^{n-1}$ such that on a deleted neighborhood $(0,T) \times \mathbb{S}^{n-1}$ of $\partial_{\infty} \widetilde{M} = \{t = 0 \}$ the metric takes the form
	\[ g = \sinh^{-2}(t) \big( dt^2 + \mathring{h} + t^n\gamma + \mathcal{O}(t^{n+1})\big), \]
	where $h(t) = \mathring{h} + t^n \gamma + \mathcal{O}(t^{n+1})$ is a family of metrics on $\mathbb{S}^{n-1}$, depending smoothly on $t \in [0,T)$, $\mathring{h}$ is the round metric on $\mathbb{S}^{n-1}$, and $\gamma$ is a symmetric two-tensor on $\mathbb{S}^{n-1}$.
\end{definition}

\begin{remark}
	Note that in the above definition, the order notations $\mathcal{O}(t)$ and $o(t)$ throughout the paper are to be understood as $t \to 0$.  That is, at the conformal infinity of the manifold.
\end{remark}

Given this definition of asymptotically hyperbolic, there is also a corresponding notion of mass analogous to the ADM-mass in the asymptotically flat setting.  Similarly, it is also a geometric invariant within this class of metrics.  

\begin{definition}\label{def:mass}
	Let $(M^n,g)$ be an asymptotically hyperbolic manifold with $R_g \ge -n(n-1)$ and metric expression at infinity as in Definition \ref{def:AH}.  We define the \emph{mass} to be
	\[ m(g) = \int\limits_{\mathbb{S}^{n-1}} \mu(g) \, dV^{n-1}_{\mathring{h}}, \]
	where $\mu(g) = \mathrm{tr}_{\mathring{h}} \gamma$ is the \emph{mass aspect function}. 
\end{definition}

For simplicity, our results are all in terms of the mass aspect function, but can also be stated in terms of the mass defined previously. 

Next, we give our adaptation of Bartnik mass in the asymptotically hyperbolic setting.  

\begin{definition}\label{def:bmass}
	Let $(\Omega^n, g_{\omega})$ be a bounded domain with $R_{g_{\omega}} \geq - n(n-1)$.  Define $\mathcal{A}_{\Omega}$ to be the class \emph{admissible extensions} of $(\Omega, g_{\omega})$, which are $n$-dimensional asymptotically hyperbolic manifolds that contain $(\Omega, g_{\omega})$ isometrically, with scalar curvature bounded below by $-n(n-1)$, and contain no closed minimal surfaces separating the boundary from infinity other than possibly on the boundary.  
\end{definition}

\begin{remark}
	The final condition for admissibility, the so-called ``no horizon condition", is one we feel deserves closer inspection.  In the asymptotically flat setting, this condition is in place to avoid situations that would render the Bartnik mass trivially zero.  That is, if we were to allow such closed minimal surfaces, one could generate admissible extensions by gluing a Schwarzschild end with arbitrarily small mass to the domain in a suitable way.  A good discussion of this can be found in \cite{HuiskenIlmanen01,Corvino2017}. 
	
	However, in the present setting, while our definition again follows this same convention, it is not yet clear whether this is the optimal.  It appears plausible that one would run into similar issues by allowing the interior of the manifold to contain constant mean curvature $n-1$ surfaces that separate the boundary from infinity.  Similarly, we could foresee gluing an AdS-Schwarzschild end of arbitrarily small mass to our initial domain.  In any case, our results are adaptable to either definition.  
\end{remark}

\begin{definition}
	Define the \emph{Bartnik mass} of $(\Omega^n, g_{\omega})$ by
	\[ m_B(\Omega, g_{\omega}) = \inf\{m(g) : (M^n,g) \in \mathcal{A}_\Omega\}.\]
\end{definition}

Lastly, we discuss static potentials and some of the properties relevant to our discussion.  
\begin{definition}
	We say that a metric $g$ is \emph{static} if there exists a smooth non-trivial function $f$ satisfying the equation
	\[ L_g^*f = -(\Delta_gf)\,g + \mathrm{Hess}_gf - f\, \mathrm{Ric}_g = 0.\] 
	A function satisfying this equation is referred to as a \emph{static potential}.  Here $L_g^*$ is the formal $L^2$-adjoint to the linearized scalar curvature operator.  
\end{definition}

\begin{remark}\label{remark:csc}
	Manifolds admitting static potentials have constant scalar curvature.  
\end{remark}

\section{Proof of Main Results}

We begin by outlining the procedure we will use for constructing a family of asymptotically hyperbolic manifolds.  More specifically, we start with a given asymptotically hyperbolic manifold with scalar curvature bounded below by $-n(n-1)$ that contains a region whose exterior does not admit a static potential.  Using this presumption of non-existence of a static potential, we apply a result of Corvino \cite{Corvino2017}, Proposition \ref{proposition:bump}, to bump up the scalar curvature in this region.  This allows us to assume the scalar curvature is strictly larger than $-n(n-1)$ somewhere.  Since the scalar curvature is now non-constant, we generate a solution to the Yamabe equation for the prescribed constant $-n(n-1)$ scalar curvature.  With this solution we define a family of conformally related metrics that will change the asymptotics at infinity and cause the mass to decrease.  Lastly, we use a bump function to glue the given metric around the non-static region with the conformal metric at infinity and maintain all of the properties we need.  This will prove the following theorem:
\begin{theorem}\label{family}
	Let $(M^n,g)$ be an asymptotically hyperbolic manifold with $R_g \ge -n(n-1)$.  If there exists a domain $\Omega \subset M$ such that no static potential exists on $M \setminus \overline{\Omega}$, then there is a family asymptotically hyperbolic metrics, $g_s$ for $0 < s < 1$, on $M$ such that $g_{s} \to g$ in $C^{k,\alpha}(M)$ as $s \to 0$ and 
	\begin{enumeratei}
		\item $\mu(g_{s}) < \mu(g)$
		\item $R_{g_{s}} \ge -n(n-1)$
		\item $g_{s} = g$ on a neighborhood of $\Omega$.  
	\end{enumeratei}
\end{theorem}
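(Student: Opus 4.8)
The plan is to follow the four-stage outline sketched just before the theorem statement, realizing each stage carefully.

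\medskip

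\noindent\textbf{Step 1: Bump up the scalar curvature.} Starting from $(M^n,g)$ with $R_g \ge -n(n-1)$ and a domain $\Omega$ whose exterior admits no static potential, I would first apply Corvino's deformation result (Proposition \ref{proposition:bump} in the appendix) on a precompact region $U \subset M \setminus \overline{\Omega}$. Since no static potential exists on $M \setminus \overline{\Omega}$, we may choose $U$ so that $L_g^*$ has trivial kernel on $U$ (or at least locally arrange the hypotheses of the bump proposition), and prescribe a compactly-supported deformation that raises the scalar curvature on $U$ while keeping it $\ge -n(n-1)$ everywhere and leaving $g$ unchanged near $\Omega$ and near infinity. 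The upshot is a metric $\bar g$, equal to $g$ outside a compact set in $M \setminus \overline{\Omega}$, with $R_{\bar g} \ge -n(n-1)$ and $R_{\bar g}(p) > -n(n-1)$ for some $p$. This is the step where the non-staticity hypothesis is actually used. Because $\bar g = g$ near infinity, $\bar g$ is still asymptotically hyperbolic with the same mass aspect as $g$.

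\medskip

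\noindent\textbf{Step 2: Solve the Yamabe-type equation for constant scalar curvature $-n(n-1)$.} Next I would look for a conformal factor of the form $u_s = 1 + s v$ (or work with $\bar g_s = u_s^{4/(n-2)}\bar g$) solving the prescribed scalar curvature equation
\[
-\frac{4(n-1)}{n-2}\Delta_{\bar g} u + R_{\bar g}\, u = -n(n-1)\, u^{\frac{n+2}{n-2}},
\]
with $u \to 1$ at infinity. Near $u\equiv 1$ this is governed by the linearized operator $-\frac{4(n-1)}{n-2}\Delta_{\bar g} + R_{\bar g} + n(n-1)$, whose potential $R_{\bar g}+n(n-1)$ is $\ge 0$ and strictly positive somewhere; on an asymptotically hyperbolic manifold this operator is invertible on the appropriate weighted Hölder spaces (this is exactly the analytic input behind the Andersson--Cai--Galloway argument), so the implicit function theorem produces a one-parameter family $u_s$, smooth in $s$, with $u_s \to 1$ in $C^{k,\alpha}$ as $s \to 0$. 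The key point for the mass computation is the leading asymptotic behavior: $u_s - 1$ decays like a multiple of $t^{n-1}$ (the indicial-root rate of the operator), and this multiple has a sign determined by the sign of $R_{\bar g}+n(n-1)$ — it is nonpositive, nonzero — which is precisely what forces the mass to strictly decrease. I would record $\bar g_s := u_s^{4/(n-2)}\bar g$: it has $R_{\bar g_s} = -n(n-1)$, is asymptotically hyperbolic, and by the expansion of $u_s$ satisfies $\mu(\bar g_s) < \mu(\bar g) = \mu(g)$ for all $s \in (0,1)$ (after reparametrizing $s$).

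\medskip

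\noindent\textbf{Step 3: Glue to keep $g$ near $\Omega$.} The metric $\bar g_s$ already equals $g$ near $\Omega$ if the conformal factor is $1$ there; but in general $u_s$ need not be identically $1$ near $\Omega$, only close to it. To get property (iii) exactly, I would interpolate: fix a cutoff $\chi$ equal to $1$ on a neighborhood of $\Omega$ and $0$ outside a slightly larger set (contained in $M\setminus\overline\Omega$), and set $g_s := \big((1-\chi) u_s + \chi\big)^{4/(n-2)}\bar g$, or more robustly glue the metrics $\bar g_s$ and $g$ directly via $\chi$. Since $u_s \to 1$ in $C^{k,\alpha}$, for $s$ small the glued metric has scalar curvature arbitrarily $C^0$-close to $-n(n-1)$ on the (compact) gluing annulus; but this alone does not give $R_{g_s}\ge -n(n-1)$ there. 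To fix the sign I would either (a) re-run the Corvino bump argument so that $R_{\bar g}$ has a fixed strictly positive gap $\delta>0$ on the gluing annulus, so the $O(s)$ perturbation from gluing cannot push $R$ below $-n(n-1)$; or (b) apply Corvino's bump result a second time on the annulus after gluing to correct the scalar curvature back up to $\ge -n(n-1)$ while changing nothing near $\Omega$ or near infinity. Either way, (ii) is restored, (iii) holds by construction, (i) is unaffected since the gluing happens in a compact region away from infinity, and $g_s \to g$ in $C^{k,\alpha}(M)$ because $\bar g \to g$ trivially (it only differs from $g$ by the fixed bump, which we may also take $s$-dependent and shrinking) and $u_s \to 1$.

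\medskip

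\noindent\textbf{Main obstacle.} The delicate point is Step 2 together with the sign of the mass change: one must show both that the linearized Yamabe operator $-\frac{4(n-1)}{n-2}\Delta_{\bar g} + R_{\bar g} + n(n-1)$ is an isomorphism on weighted function spaces adapted to the hyperbolic asymptotics (so the family $u_s$ exists and is smooth down to $s=0$), \emph{and} that the correction term in the asymptotic expansion of $u_s$ enters the mass aspect with a definite, nonzero sign. This is the asymptotically hyperbolic analogue of the computation in \cite{Andersson2007}, and carrying it out requires care with the indicial roots of the operator near conformal infinity and with the precise way the $t^n\gamma$ term in the metric expansion transforms under the conformal change. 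The gluing in Step 3 is then a comparatively routine matter of bookkeeping, provided the scalar-curvature gap from Step 1 is arranged generously enough — or one simply invokes Proposition \ref{proposition:bump} once more.
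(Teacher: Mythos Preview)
Your overall architecture matches the paper's: bump the scalar curvature via Proposition \ref{proposition:bump}, use a Yamabe-type conformal change to lower the mass aspect, glue to recover $g$ near $\Omega$, and repair the scalar curvature on the transition annulus with a second application of Proposition \ref{proposition:bump}. The gap is in how the one-parameter family arises in Step 2, and consequently where the gluing in Step 3 must take place.

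The Yamabe equation with $u\to 1$ at infinity has a \emph{unique} positive solution; invertibility of the linearization plus the implicit function theorem yields uniqueness, not a family. The paper solves the equation \emph{once} for $u=1+v$ (with $v\le 0$ and $v=v_n t^n+o(t^n)$, $v_n<0$ --- note the decay rate is $t^n$, not $t^{n-1}$) and then \emph{linearly interpolates}: $u_s=1+sv$, $h_s=u_s^{4/(n-2)}g$. For $0<s<1$ these are \emph{not} Yamabe solutions, so your claim $R_{\bar g_s}=-n(n-1)$ fails. What one can show, by a direct computation using $0<u_s\le 1$ and the sign of $\Delta_g v$ near infinity, is only that $R_{h_s}\ge R_g\ge -n(n-1)$ on a small collar $W_0$ of conformal infinity.

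This forces the gluing to happen \emph{near infinity}, not near $\Omega$: the paper takes $g_s=(1-\varphi)g+\varphi\,h_s$ with $\varphi\equiv 1$ on $W_0$ and $\varphi\equiv 0$ outside a slightly larger collar $W_1$, so that $g_s=g$ on the entire bulk (in particular near $\Omega$, giving (iii) for free) and $g_s=h_s$ near infinity (giving (i) via Lemma \ref{Lemma:asymptotics}). The scalar-curvature lower bound is then automatic except on the annulus $W_1\setminus\overline{W}_0$, where your option (b) --- a second use of Proposition \ref{proposition:bump} --- is precisely what the paper invokes. Your proposed gluing near $\Omega$ would instead require $R_{\bar g_s}\ge -n(n-1)$ on the whole exterior, which the interpolated family does not provide.
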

\begin{proof}
	Let $(M^n,g)$ be an asymptotically hyperbolic manifold with $R_g \ge - n(n-1)$ containing a domain $\Omega$ for which no static potential exists on $M \setminus \overline{\Omega}$.  The proof begins with a reduction to the case that $R_g > -n(n-1)$ somewhere.  
	
	First, we show that we can bump the scalar curvature up locally if necessary.  Since no static potential exists on $M\setminus \overline{\Omega}$, there exists a precompact domain $V \subset M \setminus \overline{\Omega}$ for which $g$ does not admit a static potential.  Indeed, if every precompact set in $M \setminus \overline{\Omega}$ was static, an exhaustion of $M \setminus \overline{\Omega}$ could be created, along with a sequence of static potentials.  It follows that this sequence will converge to a static potential on $M \setminus \overline{\Omega}$ - for details see \cite[Section 2.1]{Corvino2017}.  Since $g$ is not static on $V$, by Lemma \ref{Lemma:asymptotics}, there exists a smooth metric close to $g$ in the $C^{k,\alpha}(\overline{V})$ sense with compact support strictly contained within $V$ and has scalar curvature that is strictly larger than $-n(n-1)$.  Note that this new metric on $M \setminus \overline{\Omega}$ is not static since the scalar curvature is not constant on $M \setminus \overline{\Omega}$.  Thus, we may assume without loss of generality that $R_g > -n(n-1)$ somewhere.
	
	Under the assumption that the scalar curvature inequality is strict somewhere on $M$, we now produce the family of metrics in the conclusion of the theorem.  The following analysis of the solution can also be found in \cite[Section 3.2.2]{Andersson2007}.  We start by studying solutions to the Yamabe equation for the prescribed scalar curvature $-n(n-1)$,
	\[ -\dfrac{4(n-1)}{n-2}\,\Delta_g u + R_g\,u + n(n-1)\, u^{\frac{n+2}{n-2}} = 0. \]
	There exists a unique positive solution $u$ on $M$ such that $u \to 1$ at the conformal infinity.  If we let $v = u - 1$ so that $v \to 0$ at the conformal infinity and define $\hat{R} = \dfrac{n-2}{4(n-1)} \,\big(R_g + n(n-1)\big)$, then the Yamabe equation becomes
	\[ -\Delta_g  v + nv + \hat{R} v = -\hat{R} - F(v) ,\]
	where 
	\[ F(v) = \dfrac{n(n-2)}{4} \left[ (1 + v)^{\frac{n+2}{n-2}} -1 - \dfrac{n+2}{n-2} \,v \right]. \]
	The analysis allows us to conclude that $v \le 0$ via the maximum principle with $v = v_{n}t^n + o(t^n)$ being smooth up to the conformal boundary.  Additionally, the function $v_n$ is a function defined only on $\mathbb{S}^{n-1}$.  Furthermore, when re-writing the Yamabe equation in $v$, we have
	\begin{align}\label{laplace}
		- \Delta_g v + nv = f, 
	\end{align}
	where $f = -\hat{R}(1+v) - F(v) = \mathcal{O}(t^{n+1})$.  Because $f \le 0$ and $f \not\equiv 0$ due to the scalar curvature observing the strict inequality, it follows that $v_{n}(x)< 0$, where $x \in \mathbb{S}^{n-1}$.  
	
	Next, define $h_s = u_s^{\frac{4}{n-2}}g$, where $u_s = 1 + sv$ and $v$ is a solution to (\ref*{laplace}) with asymptotics described above.  By the formula for scalar curvature under conformal change, we have
	\[ R_{h_s} = u_s^{-\frac{4}{n-2}}R_g - \frac{4(n-1)}{n-2}u_s^{-\frac{n+2}{n-2}}\Delta_gu_s \]
	Due to the fact that $-1 < v \le 0$ and $0 < s < 1$, we have $0 < u_s \le 1$ with $u_s^{-\frac{4}{n-2}} \geq 1$ and $u_s^{-\frac{n+2}{n-2}} \ge 1$.  Since $v_n < 0$ and $-\Delta_g u_s = -s(nv - f) = -s(nv_nt^n + o(t^{n}))$ we may choose a sufficiently small neighborhood of infinity such that $-\Delta_g u_s \ge 0$.  Lastly, these bounds imply that $R_{h_s} \ge R_g \ge - n(n-1)$ on a sufficiently small neighborhood of infinity, say $W_0 := (0,t_0) \times \mathbb{S}^{n-1}$.  Here $t_0 > 0$ is also chosen small enough so that $W_0$ does not intersect $\Omega$ and is not dependent on $s$.  
	
	Let $W_1 := (0,t_1) \times \mathbb{S}^{n-1} \supset W_0$ be an open set with $t_1> t_0$ chosen small enough so that $M \setminus \overline{W}_1 \supset \Omega$. Now, we define $g_s = (1-\varphi)g + \varphi\,h_s$, where $\varphi$ is smooth function on $M$ such that $\varphi \equiv 1$ on $W_0$, $\varphi \equiv 0$ on $M \setminus \overline{W}_1$, and non-increasing as $t$ increases.  Note that $\varphi$ can be chosen so $g_s \to g$ in $C^{k,\alpha}(M)$ by adjusting the neighborhoods $W_0$ and $W_1$ to control the derivatives as necessary.  By the definition of $h_s$, we have $h_s \to g$ as $s \to 0$ so that $g_s \to g$.  Since the metrics $g_s$ and $h_s$ are identical at infinity, these metrics are asymptotically hyperbolic with $\mu(g_s) = \mu(h_s) < \mu(g)$ by Lemma \ref{Lemma:asymptotics}.  
	
	Lastly, since we know $R_{g_s} = R_g \ge -n(n-1)$ on $M \setminus \overline{W}_1$ and $R_{g_s} \ge -n(n-1)$ on $W_0$, as shown above, it suffices to show that $R_{g_s} \ge - n(n-1)$ on $V_0: = W_1 \setminus \overline{W}_0$.  If $g_s$ was static on $V_0$, the scalar curvature would be constant, and thus would be greater than or equal to $-n(n-1)$ since it shares a boundary with $W_0$.  On the other hand, if $g_s$ was not static on $V_0$ we may again apply Lemma \ref{proposition:bump} to bump up the scalar curvature so that it exceeds $-n(n-1)$ and remains unchanged outside $V_0$.  Note that for the sake of matching the notation in Proposition \ref{proposition:bump}, we assume $V$ is a precompact domain containing $V_0$ that is disjoint from $\Omega$, which cannot be static when $V_0$ is not.  The metric $g$ and $V_0$ then determine a positive constant $\varepsilon_0$.  Let $k_s$ be a smooth non-negative function on $M$ with compact support in $\overline{V}_0$ such that $(R_{g_s} + n(n-1))^- \le k_s \le \mathrm{max}_{\overline{V}_0}(R_{g_s}+n(n-1))^{-}$.  When $g_s$ is sufficiently close to $g$ by choosing a small $s > 0$, we have $\Vert k_s\Vert_{C^{0,\alpha}} < \varepsilon_0$.  By the proposition, there exists a smooth metric $g_s + h$, where $h$ is supported in $\overline{V}_0$ such that $R_{g_s + h} = R_{g_s} + k_s \ge -n(n-1)$.  Note that $h$ also depends on $s$, but we avoid using an $s$ to index as to not conflict with the previously established $h_s$.  We rename $g_s = g_s +h$ if this procedure is necessary.  Thus, the metrics $g_s$ have the desired properties. 
\end{proof}

\begin{remark}
	Note that since our original manifold is not static, we know that the mass must be positive, due to the positive mass theorem and its rigidity statement.  Indeed if the mass was zero, then it would be isometric to hyperbolic space and thus static. 
\end{remark}

\begin{theorem}\label{theorem:main}
	Let $(M^n,g)$, for $3 \le n \le 7$, be an asymptotically hyperbolic minimal mass extension of a domain $\Omega$.  Then $M \setminus \overline{\Omega}$ admits a static potential. 
\end{theorem}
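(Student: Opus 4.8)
The plan is to argue by contradiction, using Theorem~\ref{family} to manufacture admissible competitors of smaller mass unless minimal surfaces intervene, and then ruling out those minimal surfaces by a compactness argument. Assume, for contradiction, that $M\setminus\overline\Omega$ admits \emph{no} static potential. Since $(M,g)$ is asymptotically hyperbolic with $R_g\ge -n(n-1)$, Theorem~\ref{family} produces a family of asymptotically hyperbolic metrics $g_s$, $0<s<1$, on $M$ with $g_s\to g$ in $C^{k,\alpha}(M)$ as $s\to 0$, with $\mu(g_s)<\mu(g)$, $R_{g_s}\ge -n(n-1)$, and $g_s=g$ on a neighborhood of $\Omega$. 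Integrating the first inequality over $\mathbb{S}^{n-1}$ gives $m(g_s)<m(g)$. Now $(M,g)$ is a minimal mass extension, so $(M,g)\in\mathcal{A}_\Omega$ and $m(g)=m_B(\Omega,g_\omega)$; on the other hand, each $g_s$ is asymptotically hyperbolic, has $R_{g_s}\ge -n(n-1)$, and contains $(\Omega,g_\omega)$ isometrically (since $g_s=g$ near $\Omega$). Thus the only admissibility requirement each $g_s$ can fail is the no-horizon condition, and since $m(g_s)<m_B(\Omega,g_\omega)$ it must. Hence for every $0<s<1$ there is a closed embedded minimal hypersurface in $(M,g_s)$, not equal to $\partial\Omega$, separating $\partial\Omega$ from the conformal infinity; by a standard argument (valid in the range $3\le n\le 7$) we may take it to be a stable minimal hypersurface $\Sigma_s\subset M\setminus\overline\Omega$.

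Next I would confine the $\Sigma_s$ to a single compact region and obtain uniform area and curvature bounds. For the outer barrier, recall that near conformal infinity $g_s$ agrees with the conformal metric $h_s=u_s^{4/(n-2)}g$ and that $h_s\to g$; since the coordinate slices $\{t=c\}$ have $g$-mean curvature tending to $n-1>0$ as $c\to 0$, a fixed small slice $\{t=c_0\}$ is strictly mean-convex for every $g_s$ with $s$ small, and a separating minimal hypersurface cannot enter the region $\{t<c_0\}$. For the inner side, $g_s=g$ near $\partial\Omega$, so each $\Sigma_s$ is $g$-minimal there: if $\partial\Omega$ is not $g$-minimal a thin collar of mean-convex slices keeps $\Sigma_s$ away from it, while if $\partial\Omega$ is $g$-minimal the strong maximum principle prevents a distinct minimal hypersurface lying to the infinity side of $\partial\Omega$ from approaching it. Thus all $\Sigma_s$ lie in a fixed compact set $K\subset M\setminus\overline\Omega$, and a mean-convex comparison bounds the area of $\Sigma_s$ by that of $\{t=c_0\}$ uniformly in $s$. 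With uniform area bounds, stability, and $3\le n\le 7$, the Schoen--Simon estimates \cite{schoensimon} give uniform curvature bounds, so after passing to a subsequence $\Sigma_s$ converges smoothly as $s\to 0$ to a closed embedded hypersurface $\Sigma_\infty\subset K$.

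Since $g_s\to g$ in $C^{k,\alpha}$ and each $\Sigma_s$ is $g_s$-minimal, the limit $\Sigma_\infty$ is $g$-minimal, closed, embedded, contained in the interior of $M\setminus\overline\Omega$, and separates $\partial\Omega$ from the conformal infinity. This contradicts $(M,g)\in\mathcal{A}_\Omega$, and therefore $M\setminus\overline\Omega$ admits a static potential, proving the theorem. I expect the compactness step to be the main obstacle: one must simultaneously rule out degeneration of the $\Sigma_s$ toward conformal infinity and collapse onto $\partial\Omega$, and must establish the uniform area and curvature estimates that force the limit to be a smooth hypersurface still carrying the topological separation property — and it is precisely this last ingredient that imposes the dimension restriction $3\le n\le 7$ via \cite{schoensimon}.
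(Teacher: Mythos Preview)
Your proof is correct and follows essentially the same route as the paper: argue by contradiction, invoke Theorem~\ref{family} to produce the family $g_s$, observe that each $g_s$ must violate the no-horizon condition, replace the resulting separating minimal hypersurfaces by stable (area-minimizing) ones, trap them in a compact region using a mean-convex spherical barrier near infinity, and apply the Schoen--Simon compactness \cite{schoensimon} to extract a limiting closed minimal hypersurface in $(M,g)$ that contradicts admissibility. If anything, you supply more detail than the paper does---in particular your discussion of an inner barrier near $\partial\Omega$ and the explicit passage from $\mu(g_s)<\mu(g)$ to $m(g_s)<m(g)$---while the paper is terser on both points.
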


\begin{proof}
	Let $(M^n,g) \in \mathcal{A}_\Omega$ be a minimal mass extension of $\Omega$, and suppose that $M\setminus \overline{\Omega}$ does not admit a static potential.  Then by Theorem \ref{family}, there exists a family of asymptotically hyperbolic metrics $g_s \to g$ as $s \to 0$ such that $\mu(g_s) < \mu(g)$, $R_{g_s} \ge - n(n-1)$, and $g_s = g$ on $\Omega$.  To prove $(M, g_s) \in \mathcal{A}_\Omega$, it suffices to show that there are no closed minimal surfaces separating $\partial \Omega$ from infinity, as the other properties of admissible extensions are clearly satisfied.  
	
	First, recall that $g_s \to g$ as $s \to 0$ in $C^{k,\alpha}(M)$.  If every metric $g_s$ admits a closed minimal surface separating $\partial \Omega$ from infinity, choose a sequence $s_k \to 0$, and let $\Sigma_k$ be an aforementioned closed minimal surface with respect to the metric $g_{s_k}$.  Within the homology class of each surface we can choose a representative that is area-minimizing.  Thus, we may assume that these surfaces are stable at the outset.  Additionally, these surfaces must all be contained within a convex spherical barrier.  If not, we would be able to find a large sphere that touches the minimal surface tangentially, where we could compare mean curvatures and apply the comparison principle to arrive at a contradiction.  Since this family of metrics converges to the fixed metric $g$, the areas of $\Sigma_k$ are uniformly bounded by the area of the barrier.  Thus by the compactness results of Schoen and Simon \cite{schoensimon}, there is a subsequence of $\Sigma_k$ converging to a smooth minimal closed surface $\Sigma$ in $(M,g)$.  Since the metric is unchanged in a neighborhood of $\Omega$, we have the existence of a closed minimal surface separating the boundary from infinity - contradicting the assumption that $(M,g) \in \mathcal{A}_\Omega$.  Thus, $M \setminus \overline{\Omega}$ will admit a static potential. 
\end{proof}

\begin{remark}
	The remark following the definition of Bartnik mass mentions that an alternate to the no horizon condition may be more optimal.  Our proof would remain unchanged except for appealing to the compactness results of Schoen and Simon \cite{schoensimon}.  Given analogous results on the convergence of constant mean curvature surfaces, our result would easily adapt under an alternate definition regarding the non-existence of closed constant mean curvature $n-1$ surfaces separating the boundary from infinity.
\end{remark}

\section{Appendix}

The following lemma is used in the proof of \ref{family}.  This result shows that the conformally related metrics generated in the proof are asymptotically hyperbolic and have strictly smaller mass.  It defines explicitly the change of variable required to compute the mass. 
\begin{lemma}\label{Lemma:asymptotics}
	Let $(M^n,g)$ be an asymptotically hyperbolic manifold with $R_g \ge - n(n-1)$.  For $0 < s < 1$, consider a function $u_s$ whose expansion at infinity given by $u_s(t,x) = 1 + s\, v_n(x)t^n + o(t^n)$, where $v_n < 0$ is a function on $\mathbb{S}^{n-1}$.  The family of metrics $h_s = u_s^{\frac{4}{n-2}}g$ are asymptotically hyperbolic with $\mu(h_s) < \mu(g)$.
\end{lemma}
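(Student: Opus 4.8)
The goal is to verify two claims about the conformally related metrics $h_s = u_s^{4/(n-2)} g$: first that each $h_s$ is again asymptotically hyperbolic in the sense of Definition~\ref{def:AH}, and second that the mass aspect strictly decreases. The strategy is a direct asymptotic expansion at the conformal boundary. First I would expand the conformal factor: since $u_s = 1 + s v_n(x) t^n + o(t^n)$, we get $u_s^{4/(n-2)} = 1 + \frac{4}{n-2} s v_n(x) t^n + o(t^n)$ by the binomial expansion near $1$. Writing $g = \sinh^{-2}(t)\big(dt^2 + \mathring h + t^n \gamma + \mathcal O(t^{n+1})\big)$, we then multiply these expansions to obtain
\[
h_s = \sinh^{-2}(t)\Big(\big(1 + \tfrac{4}{n-2} s v_n t^n\big) dt^2 + \mathring h + t^n\big(\gamma + \tfrac{4}{n-2} s v_n \,\mathring h\big) + \mathcal O(t^{n+1})\Big).
\]

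The apparent obstacle is that the $dt^2$ coefficient is no longer exactly $1$, so $h_s$ is not literally in the normal form of Definition~\ref{def:AH}. The fix is the standard one: change the radial variable. I would introduce a new boundary defining coordinate $\tau = \tau(t,x)$ chosen so that in the $(\tau,x)$ coordinates the cross term vanishes and the coefficient of $d\tau^2$ is exactly $\sinh^{-2}(\tau)$. Concretely one solves an ODE in $t$ (with $x$ as parameter) of the form $\frac{d\tau}{dt} = \sinh(\tau)^{-1}\sinh(t)\sqrt{1 + \frac{4}{n-2}s v_n t^n + \cdots}$ with $\tau(t,x) = t + \mathcal O(t^{n+1})$; expanding, one finds $\tau = t + \frac{4 s v_n}{(n-2)(2n+1)} t^{2n+1} + \cdots$, a correction of order $t^{2n+1}$, hence lower order than $t^n$ and so it does not affect the $t^n$-coefficient of the angular part. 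Substituting back, $h_s$ takes the form $\sinh^{-2}(\tau)\big(d\tau^2 + \mathring h + \tau^n \gamma_s + \mathcal O(\tau^{n+1})\big)$ with
\[
\gamma_s = \gamma + \tfrac{4}{n-2}\, s\, v_n\, \mathring h .
\]
This shows $h_s$ is asymptotically hyperbolic with conformal boundary $\mathbb S^{n-1}$ and round conformal metric $\mathring h$ unchanged.

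Finally, the mass aspect: $\mu(h_s) = \operatorname{tr}_{\mathring h}\gamma_s = \operatorname{tr}_{\mathring h}\gamma + \frac{4}{n-2}\, s\, v_n \operatorname{tr}_{\mathring h}\mathring h = \mu(g) + \frac{4(n-1)}{n-2}\, s\, v_n$, using $\operatorname{tr}_{\mathring h}\mathring h = n-1$. Since $s > 0$ and $v_n(x) < 0$ at every $x \in \mathbb S^{n-1}$, the correction term is strictly negative pointwise, so $\mu(h_s) < \mu(g)$, and integrating over $\mathbb S^{n-1}$ also gives $m(h_s) < m(g)$. The step I expect to require the most care is the coordinate change $t \mapsto \tau$: one must check that the implied diffeomorphism is smooth up to the boundary, that $\tau$ is a legitimate boundary defining function (so the new expansion genuinely matches Definition~\ref{def:AH} including the smooth dependence of the angular metric on $\tau$), and that the correction is of strictly higher order than $t^n$ so that the mass aspect is read off from $\gamma_s$ and not contaminated by the reparametrization; everything else is a routine expansion.
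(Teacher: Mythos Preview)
Your strategy is the same as the paper's---expand the conformal factor, then change the radial coordinate to restore the normal form of Definition~\ref{def:AH}---but your computation of the coordinate change is off in a way that alters the constant in the mass aspect. The correction is \emph{not} of order $t^{2n+1}$; it is of order $t^{n+1}$. Indeed, from your own ODE (which should read $\dfrac{d\tau}{dt}=\dfrac{\sinh\tau}{\sinh t}\sqrt{1+\tfrac{4}{n-2}sv_n t^n+\cdots}$, with the $\sinh$ ratio upright, not inverted) one gets, writing $\tau=t+\epsilon$ and linearizing, $\epsilon'-\epsilon\coth t=\tfrac{2sv_n}{n-2}t^n$, whose solution with $\epsilon(0)=0$ satisfies $\epsilon=\tfrac{2sv_n}{n(n-2)}t^{n+1}+o(t^{n+1})$. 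This is exactly the change $t=\tau-\tfrac{2sv_n}{n(n-2)}\tau^{n+1}$ the paper writes down explicitly.

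This matters for the angular part: because $\tau-t=O(t^{n+1})$, one has $\sinh^{-2}(t)=\sinh^{-2}(\tau)\big(1+\tfrac{4sv_n}{n(n-2)}\tau^n+O(\tau^{n+1})\big)$, which contributes an extra $\tfrac{4sv_n}{n(n-2)}\,\mathring h$ at order $\tau^n$ that you have dropped. The correct mass-aspect tensor is therefore
\[
\gamma_s=\gamma+\frac{4(n+1)sv_n}{n(n-2)}\,\mathring h,\qquad
\mu(h_s)=\mu(g)+\frac{4(n-1)(n+1)}{n(n-2)}\,sv_n,
\]
not $\gamma+\tfrac{4sv_n}{n-2}\mathring h$. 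Your qualitative conclusion $\mu(h_s)<\mu(g)$ survives because the sign of the correction is unchanged, but the constant you state is wrong. A smaller point: since $v_n$ depends on $x$, the change $t\mapsto\tau$ introduces $d\tau\,dx$ cross terms; the paper handles this by invoking \cite[Lemma~3.10]{Andersson2007} to pass to true Gaussian coordinates based at $\mathbb{S}^{n-1}$, and you should check (as the paper does) that this further adjustment does not disturb the $\tau^n$ coefficient of the angular part.
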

\begin{proof} 
	First, we use the change of variable have $t = \tau - \dfrac{2s\,v_n}{n(n-2)}\,\tau^{n+1}$.  Calculating the Taylor expansion at $t = \tau$, we have the following:
	\begin{align*}
		\sinh^{-2}(t) &= \sinh^{-2}(\tau)\Big(1 + \dfrac{4s\,v_n}{n(n-2)}\,\tau^n + \mathcal{O}(\tau^{n+2})\Big) \\
		u_s^{\frac{4}{n-2}}(t,x) &= 1 + \dfrac{4s\,v_n}{n-2}\,\tau^n + o(\tau^{n}) \\
		dt^2 &= \Big( 1 - \dfrac{4(n+1)s\,v_n}{n(n-2)}\,\tau^n + \mathcal{O}(\tau^{2n})\Big)d\tau^2\\
		t^n &= \tau^n + \mathcal{O}(\tau^{2n}).
	\end{align*}
	After substituting into the metric expansion as in Definition \ref{def:AH}, the metric $h_s$ takes the form
	\[ h_s = \dfrac{1}{\sinh^2(\tau)}\Big(d\tau^2 + \mathring{h} + \tau^n\Big( \dfrac{4s(n+1)\,v_n}{n(n-2)}\,\mathring{h} + \gamma\Big)+ \mathcal{O}(\tau^{n+1})\Big).\]
	As in \cite[Lemma 3.10]{Andersson2007}, we may use a change of coordinates (where we again use $\tau$ as the coordinate) that will put $h_s$ in Gaussian coordinates based on $\mathbb{S}^{n-1}$.  That is, we have
	\[ h_s= \frac{1}{\sinh^2(\tau)}\left(d\tau^2 + \mathring{h} + \tau^n\Big( \dfrac{4(n+1)s\,v_{n}}{n(n-2)}\,\mathring{h} + \gamma\Big)\right) \]
	where $\gamma$ is a $\tau$-depended tensor field on $\mathbb{S}^{n-1}$ such that the restriction to $\mathbb{S}^{n-1}$ is preserved after coordinate change, and $\tau$ measures the distance with respect to the metric $\sinh^2(\tau)h_s$.  
	
	Lastly, the mass aspect function for $h_s$ becomes
	\[ \mu(h_s) = \mathrm{tr}_{\mathring{h}} \left( \Big(\dfrac{4(n+1)\,s\,v_n}{n(n-2)}\,\mathring{h}\Big) + \gamma \right) = \dfrac{4(n-1)(n+1)\,s\,v_{n}}{n} + \mu(g). \]
	Since $v_{n} < 0$ and $s > 0$, we get that $\mu(h_s) < \mu(g)$. 
\end{proof}

The following is a result of Corvino, which can be found in \cite[Proposition 2.1]{Corvino2017}, and appears here for convenience.  It allows us to bump up scalar curvature in a compact region assuming that there does not exist a static potential.  

\begin{proposition}\label{proposition:bump}
	Let $V \subset M$ be a precompact smooth domain.  Suppose $g_0$ is a Riemannian metric on $M$ for which $L_{g_0}^*$ has trivial kernel on $V$, and let $V_0$ be an open set compactly contained in $V$.  There is a $C>0$, an $\varepsilon_0 > 0$ and a neighborhood $\mathcal{V}$ of $g_0$ in $C^{4,\alpha}(\overline{V})$, so that for smooth metrics $g$ on $M$ with $g|_{T\bar{V}} \in \mathcal{V}$ and functions $S \in C_c^{\infty}(M)$ with $\mathrm{spt}(S) \subset \overline{V_0}$ and $\| S \|_{C^{0,\alpha}} < \varepsilon_0$, there is a smooth metric $g+h$ on $M$ so that $h$ is supported in $\overline{V}$ and satisfies a bound $\| h\|_{C^{2,\alpha}} \le C\|S\|_{C^{0,\alpha}}$, and with $R(g+h) = R(g) + S$.  In fact, for each non-negative integer $k$, there is $C_k > 0$ so that $\|h \|_{C^{k,\alpha}} \le C_k \|S\|_{C^{k-2,\alpha}}$.
\end{proposition}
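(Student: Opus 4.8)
This is Corvino's scalar-curvature deformation lemma, so the plan is to follow the functional-analytic scheme of \cite{Corvino2000, Corvino2017}: recast the prescribed-scalar-curvature equation as a fixed-point problem for a fourth-order elliptic operator built out of $L_{g_0}^*$, and invert that operator using the hypothesis that $L_{g_0}^*$ has trivial kernel on $V$.

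First I would set up weighted function spaces adapted to $\overline{V}$. Fix a smooth positive function $\rho$ on the interior of $V$ comparable to the distance to $\partial V$ and normalized so that $\rho \le 1$, and for a large integer $N$ let $C^{k,\alpha}_{\rho^N}(V)$ be the space of functions (or of symmetric $2$-tensors) whose derivatives up to the relevant order decay like $\rho^N$ at $\partial V$, with the obvious weighted H\"older norm; if $N$ is large relative to $k$, every such tensor extends by zero across $\partial V$ to a $C^{k,\alpha}$ tensor on $M$ supported in $\overline{V}$. The device that brings in $L_g^*$ is standard: seeking the correction in the form $h = L_g^* u$ for a scalar function $u$ automatically gives $h$ of the right symmetry, and the linearization at $u = 0$ of the map $u \mapsto R(g + L_g^* u)$ is the operator $T_g := DR_g \circ L_g^*$, whose principal part is $(n-1)\Delta_g^2$, hence elliptic, and which is formally self-adjoint with respect to $L^2(V)$ since $(DR_g)^* = L_g^*$ gives $T_g = (DR_g)(DR_g)^*$.

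The key step, and the main obstacle, is to prove that $T_{g_0} \colon C^{4,\alpha}_{\rho^N}(V) \to C^{0,\alpha}_{\rho^N}(V)$ is a linear isomorphism for $N$ in a suitable range. Injectivity is exactly where the hypothesis enters: if $T_{g_0} u = 0$, then pairing with $u$ in $L^2(V)$ and integrating by parts --- the boundary integrals on $\partial V$ vanishing because of the weight --- gives $\int_V |L_{g_0}^* u|^2 = 0$, so $L_{g_0}^* u \equiv 0$, and triviality of $\ker L_{g_0}^*$ on $V$ forces $u \equiv 0$. Promoting this to the a priori estimate $\|u\|_{C^{4,\alpha}_{\rho^N}} \le C \|T_{g_0} u\|_{C^{0,\alpha}_{\rho^N}}$ is the delicate part: one has to control $u$ against the degenerating weight near $\partial V$, combining interior Schauder estimates with a rescaled Hardy-type estimate near the boundary, and then argue by contradiction using that $T_{g_0}$ is Fredholm between these spaces and has trivial kernel. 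Surjectivity then follows from the same estimate and the Fredholm alternative, the relevant cokernel again being governed by a weighted kernel of $L_{g_0}^*$ that is trivial; hence $T_{g_0}$ is invertible, with $C := \|T_{g_0}^{-1}\| < \infty$.

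With the isomorphism in hand, the remainder is the quantitative inverse function theorem. The map $\Psi_g(u) := R(g + L_g^* u) - R(g)$ is smooth between the weighted spaces near $u = 0$, with $\Psi_g(0) = 0$ and $D\Psi_g(0) = T_g$. Since invertibility is open in the operator norm and the construction depends continuously on $g$ in $C^{4,\alpha}(\overline{V})$, there is a neighborhood $\mathcal{V}$ of $g_0$ on which $T_g$ stays invertible with uniformly bounded inverse; the inverse function theorem then yields $\varepsilon_0 > 0$ and $C > 0$, depending only on $g_0$, $V$, and $V_0$, so that for $g|_{T\overline{V}} \in \mathcal{V}$ and $\|S\|_{C^{0,\alpha}} < \varepsilon_0$ there is $u$ with $\Psi_g(u) = S$ and $\|u\|_{C^{4,\alpha}_{\rho^N}} \le C \|S\|_{C^{0,\alpha}}$ --- here one uses that $S$ is supported in $\overline{V_0}$, where $\rho$ is bounded below, so its weighted norm is dominated by its ordinary norm. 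Setting $h := L_g^* u$, which is supported in $\overline{V}$, gives $R(g + h) = R(g) + S$ and $\|h\|_{C^{2,\alpha}} \le C \|S\|_{C^{0,\alpha}}$. Finally, since $S$ is smooth, elliptic regularity applied to $R(g+h) = R(g) + S$ bootstraps $h$ to be smooth, and iterating the weighted Schauder estimates for $T_g$ produces the higher bounds $\|h\|_{C^{k,\alpha}} \le C_k \|S\|_{C^{k-2,\alpha}}$. One could alternatively simply invoke \cite[Proposition 2.1]{Corvino2017}.
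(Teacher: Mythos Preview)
The paper does not prove this proposition at all: it is stated in the appendix purely for convenience, with the explicit attribution ``the following is a result of Corvino, which can be found in \cite[Proposition 2.1]{Corvino2017}.'' Your proposal correctly recognizes this --- you even close by noting one could simply invoke that citation --- and the sketch you give is an accurate outline of Corvino's weighted-space/inverse-function-theorem argument, so there is nothing to compare.
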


\printbibliography{AH MME Staticity}

\end{document}